\def\P{{\mathbb{P}}}
\def\E{{\mathbb{E}}}
\def\Z{{\mathbb{Z}}}
\newtheorem{thm}{Theorem}
\definecolor{orcid_color}{HTML}{A6CE39}
\DeclareRobustCommand{\orcidicon}{%
	\raisebox{.2mm}{\scalerel*{%
	\begin{tikzpicture}[xscale=1,yscale=-1,transform shape]
	\filldraw[color=orcid_color] svg {M256,128c0,70.7-57.3,128-128,128C57.3,256,0,198.7,0,128C0,57.3,57.3,0,128,0C198.7,0,256,57.3,256,128z};
	\filldraw[color=white] svg {M86.3,186.2H70.9V79.1h15.4v48.4V186.2z} svg {M108.9,79.1h41.6c39.6,0,57,28.3,57,53.6c0,27.5-21.5,53.6-56.8,53.6h-41.8V79.1z M124.3,172.4h24.5
		c34.9,0,42.9-26.5,42.9-39.7c0-21.5-13.7-39.7-43.7-39.7h-23.7V172.4z} svg {M88.7,56.8c0,5.5-4.5,10.1-10.1,10.1c-5.6,0-10.1-4.6-10.1-10.1c0-5.6,4.5-10.1,10.1-10.1
		C84.2,46.7,88.7,51.3,88.7,56.8z};
	\end{tikzpicture}}{|}}%
}
\newcommand{\orcid}[1]{\href{https://orcid.org/#1}{\orcidicon}}
\title{Burning Hamming graphs}
\author[Norihide Tokushige]{Norihide Tokushige\,\orcid{0000-0002-9487-7545}}
\address{College of Education, University of the Ryukyus, Nishihara  903-0213, Japan}
\email{hide@edu.u-ryukyu.ac.jp}
\urladdr{http://www.cc.u-ryukyu.ac.jp/~hide/}
\begin{document}
\begin{abstract}
The Hamming graph $H(n,q)$ is defined on the vertex set $[q]^n$ and
two vertices are adjacent if and only if they differ in precisely one 
coordinate.
Alon \cite{Alon} proved that the burning number of $H(n,2)$ is 
$\lceil\frac n2\rceil+1$.
In this note we give a short proof of a fact that the burning number of 
$H(n,q)$ is $(1-\frac 1q)n+O(\sqrt{n\log n})$ for fixed $q\geq 2$ and $n\to\infty$.
\end{abstract}

\maketitle
\hypersetup{pdfborder={0 0 1}} % Put back the borders of hyperlinks

Graph burning has been actively studied in recent years as a model for the 
spread of influence in a network, see e.g., \cite{Bonato}.
One of the seminal results due to Alon determines the burning number of the
$n$-dimensional cube. In this note we present a short proof yielding
an asymptotic bound of the burning number of the Hamming graph.

Let us define the burning number of a finite graph $G$ with the vertex set 
$V$. For vertices $x,y\in V$ let $d(x,y)$ denote the distance between 
$x$ and $y$.
For a non-negative integer $k$, 
let $\Gamma_k(x)$ denote the $k$-neighbors of $x$, that is,
the set of vertices $y\in V$ such that $d(x,y)\leq k$.
We say that $x_0,x_1,\ldots,x_b\in V$ is a burning sequence of length $b+1$
if $\bigcup_{k=0}^b\Gamma_{b-k}(x_k)=V$. 
The burning number of $G$, denoted by $\beta(G)$, is defined to be the minimum 
length of the burning sequences.

A path with $n$ vertices has burning number $\lceil\sqrt n\rceil$,
and it is conjectured that $\beta(G)\leq\lceil\sqrt n\rceil$ for every
$n$-vertex graph $G$. Although this is one of the major open problems 
concerning graph burning, it is also interesting to find graphs which
have small burning number and small maximum degree. We will show that
Hamming graphs satisfy these conditions.

Recall some basic facts about Hamming graphs.
For positive integers $n,q$, 
let $[q]:=\{0,1,\ldots,q-1\}$ and let $[q]^n$ denote the set of 
$n$-tuple of elements of $[q]$. For $x\in [q]^n$ we write $(x)_i$ for
the $i$th coordinate of $x$. The Hamming distance between 
$x,y\in[q]^n$ is defined to be $\#\{i:(x)_i\neq(y)_i\}$.
The Hamming graph $H(n,q)$ has the vertex set $V=[q]^n$, and
two vertices are adjacent if they have Hamming distance one. 
Thus $H(n,q)$ is $(q-1)n$-regular, and 
$|\Gamma_k(x)|=\sum_{i=0}^k(q-1)^i\binom ni$ is independent of the
choice of $x\in V$. Note that $H(n,2)$ is the $n$-dimensional cube. 

Alon considered a problem of transmitting on the $n$-dimensional cube.
His result is restated in terms of burning number as follows.
\begin{thm}[Alon \cite{Alon}]
$\beta(H(n,2))=\lceil\frac n2\rceil + 1 =\lfloor\frac n2+\frac32\rfloor$.
\end{thm}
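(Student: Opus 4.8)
The plan is to read a burning sequence $x_0,\ldots,x_b$ as a covering of $\{0,1\}^n$ by the Hamming balls $\Gamma_{b-k}(x_k)$, whose radii run through $b,b-1,\ldots,1,0$, so that the whole problem becomes: find the least $b$ for which balls of radii $0,1,\ldots,b$ (one of each) can cover the cube. I would prove the two inequalities $\beta(H(n,2))\le\lceil\frac n2\rceil+1$ and $\beta(H(n,2))\ge\lceil\frac n2\rceil+1$ separately, the first being routine and the second being the real content.

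For the upper bound I set $b=\lceil\frac n2\rceil$, take $x_0=\mathbf 0$ and $x_1=\mathbf 1$, and place $x_2,\ldots,x_b$ arbitrarily. Then $\Gamma_b(x_0)$ is exactly the set of vertices of Hamming weight at most $b$, while $\Gamma_{b-1}(x_1)$ is the set of vertices of weight at least $n-b+1$. Since $2b\ge n$ forces $b+1>n-b$, the two weight ranges $\{0,\ldots,b\}$ and $\{n-b+1,\ldots,n\}$ leave no gap, so these two balls alone already cover $\{0,1\}^n$; the remaining balls are wasted. This yields $\beta(H(n,2))\le b+1=\lceil\frac n2\rceil+1$.

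The lower bound is the crux: I must show that for $b=\lceil\frac n2\rceil-1$ \emph{no} placement of balls of radii $0,\ldots,b$ covers the cube. The structural fact I would build on is that every such radius satisfies $r_k\le b<\frac n2$, and since $d(x,y)+d(x,\bar y)=n$ for any antipodal pair $y,\bar y$, no ball of radius below $\frac n2$ can contain both $y$ and $\bar y$. Hence each ball meets every antipodal pair in at most one point, and covering forces the two points of each of the $2^{n-1}$ pairs to be caught by two \emph{different} balls. A warning I would record up front is that the naive volume inequality $\sum_{k}|\Gamma_{b-k}(x_k)|=\sum_{i=0}^{b}(b+1-i)\binom ni\ge 2^n$ is merely a necessary condition, and a short estimate shows that this quantity actually \emph{exceeds} $2^n$ once $n$ is large; so the balls must overlap heavily and covering must fail for a more delicate reason than a counting deficit.

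To expose that reason I would symmetrize: translating by a coordinate automorphism, I may assume the largest ball is centered at $\mathbf 0$, so it covers precisely the vertices of weight $\le b$ and leaves the entire upper half (weight $\ge b+1$, which for odd $n$ is exactly $2^{n-1}$ vertices) to be covered by the strictly smaller balls of radii $0,\ldots,b-1$; by complementation this is the same as covering a low-weight region by balls of radius below $\frac n2$. The main obstacle—and the step I expect to be genuinely hard—is turning the qualitative antipodal overlap into a quantitative deficiency. The route I would pursue is to design a weighting of the vertices, heavier near the middle layers, under which every ball of radius at most $b$ carries total weight strictly less than $\frac{1}{b+1}$ of the whole, so that $b+1$ balls can never reach full weight. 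The subtlety is that an off-center small ball can still reach surprisingly many high-weight vertices, so the weighting must be tuned against exactly this; making the estimate sharp precisely at $b=\lceil\frac n2\rceil$, rather than merely reproducing the useless volume bound, is where the real difficulty lies.
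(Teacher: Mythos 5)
Your upper bound is correct and is exactly the paper's argument: with $b=\lceil n/2\rceil$, the balls $\Gamma_b(\mathbf{0})$ and $\Gamma_{b-1}(\mathbf{1})$ cover the weight ranges $\{0,\ldots,b\}$ and $\{n-b+1,\ldots,n\}$, which leave no gap because $2b\ge n$. But the lower bound---which you yourself identify as the real content---is not proved: your text ends with a plan, not an argument. This is precisely the half that the paper flags as non-trivial and attributes to Alon's use of the Beck--Spencer lemma on balancing matrices with line shifts, a combinatorial discrepancy result; nothing in your proposal substitutes for it.

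Worse, the specific route you propose provably cannot succeed. You ask for a weighting of the vertices under which \emph{every} ball of radius at most $b=\lceil n/2\rceil-1$ carries weight strictly less than $1/(b+1)$ of the total. No such weighting exists: already for odd $n\ge 5$ we have $b=(n-1)/2$, and the two balls $\Gamma_b(\mathbf{0})$ and $\Gamma_b(\mathbf{1})$ together cover all of $\{0,1\}^n$ (every vertex has weight $\le(n-1)/2$ or $\ge(n+1)/2$), so under any weighting one of these two balls carries weight at least $1/2$, which exceeds $1/(b+1)$. The failure is structural, not a matter of tuning: your relaxation forgets that in a burning sequence only one ball has radius $b$, while the radii decrease as $b,b-1,\ldots,0$. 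Any argument that would rule out $b+1$ balls \emph{each} of radius $b$ is trying to prove a false statement, so it can never close. A correct lower bound must exploit the decreasing radii---one needs a single point $y$ with $d(y,x_k)>b-k$ simultaneously for all $k$, i.e., control over a whole sequence of distances with slack growing by one per step---and that is exactly what the Beck--Spencer discrepancy lemma delivers in Alon's proof. Your antipodal-pair observation and your (correct) remark that the naive volume count exceeds $2^n$ are sensible sanity checks, but they do not bridge this gap.
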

\noindent
It is easy to see that $\beta(H(n,2))\leq b+1$, where $b:=\lceil\frac n2\rceil$.
Indeed, by choosing $x_0=(0,\ldots,0)$ and $x_1=(1,\ldots,1)$, we see that
$\Gamma_b(x_0)\cup\Gamma_{b-1}(x_1)=[2]^n$. (Thus you can choose
$x_2,\ldots,x_b$ arbitrarily.) On the other hand,
it is non-trivial to see that $b+1$ is the correct lower bound.
To this end, Alon used the Beck--Spencer Lemma \cite{Beck-Spencer} 
which is based on a result of Beck and Fiala \cite{Beck-Fiala} for 
combinatorial discrepancies.

Now we show that $\beta(H(n,q))=(1-\frac 1q)n+O(\sqrt{n\log n})$ for fixed
$q\geq 2$ and $n\to\infty$. We mention that in \cite{T2024} the error term
is improved to $O(1)$, but the proof is much more complicated.

\begin{thm}
We have
\begin{align*}
\beta(H(n,q))\leq \lfloor(1-\tfrac1q)n+\tfrac{q+1}2\rfloor. 
\end{align*}
\end{thm}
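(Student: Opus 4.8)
The plan is to exhibit an explicit burning sequence of length $b+1=\lfloor(1-\frac1q)n+\frac{q+1}2\rfloor$, where I set $b:=\lfloor(1-\frac1q)n+\frac{q-1}2\rfloor$. For $n$ large enough that $b\ge q-1$, I would take the first $q$ centers to be the constant vectors $x_j:=(j,j,\ldots,j)$ for $j\in[q]$, assigning center $x_j$ the radius $b-j$ (so the radii are $b,b-1,\ldots,b-q+1$), and let $x_q,\ldots,x_b$ be arbitrary. Since including extra balls in the union can only help, it suffices to prove that the $q$ balls $\Gamma_{b-j}(x_j)$, $j\in[q]$, already cover all of $V=[q]^n$.

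For the covering step, fix $y\in[q]^n$ and write $m_j:=\#\{i:(y)_i=j\}$ for the number of coordinates of $y$ equal to $j$, so that $\sum_{j=0}^{q-1}m_j=n$ and $d(y,x_j)=n-m_j$. Then $y\in\Gamma_{b-j}(x_j)$ precisely when $n-m_j\le b-j$, i.e.\ $m_j\ge n-b+j$. Thus the whole claim reduces to showing that for every choice of nonnegative integers $m_0,\ldots,m_{q-1}$ summing to $n$, at least one index $j$ satisfies $m_j\ge n-b+j$.

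I would establish this by pigeonhole: if instead $m_j\le n-b+j-1$ held for every $j$, summing over $j$ would give
\[
n=\sum_{j=0}^{q-1}m_j\le\sum_{j=0}^{q-1}(n-b+j-1)=q(n-b-1)+\binom q2,
\]
which rearranges to $b\le(1-\frac1q)n+\frac{q-3}2$. But the choice $b=\lfloor(1-\frac1q)n+\frac{q-1}2\rfloor$ satisfies $b>(1-\frac1q)n+\frac{q-1}2-1=(1-\frac1q)n+\frac{q-3}2$, a contradiction. Hence some $j$ works, so the $q$ constant-vector balls cover $V$ and the stated sequence burns $H(n,q)$.

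I expect the only delicate point to be calibrating the additive constant so that the floor value of $b$ is exactly large enough to defeat the pigeonhole bound; it is the strict inequality $\lfloor x\rfloor>x-1$ that makes the constant $\frac{q-1}2$ (equivalently the $\frac{q+1}2$ in the sequence length) precisely what is needed. I would also record the mild hypothesis $b\ge q-1$, which guarantees that the radii $b,b-1,\ldots,b-q+1$ are nonnegative and that the sequence has at least $q$ terms to host the constant vectors; this holds for all large $n$ with $q$ fixed.
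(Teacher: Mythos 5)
Your proof is correct and takes essentially the same route as the paper: both burn from the constant vectors $x_j=(j,\ldots,j)$ with radii $b,b-1,\ldots,b-q+1$ and obtain the covering by the same summation/pigeonhole contradiction, and your direct floor parametrization of $b$ even streamlines the paper's bookkeeping (which goes through $n=qk+r$, a ceiling expression for $s$, and a final ceiling-to-floor identity). The only loose end is your hypothesis $b\ge q-1$ (equivalently $n\ge q/2$): for smaller $n$ the bound is trivial, since $\Gamma_n(x_0)=V$ gives $\beta(H(n,q))\le n+1\le\lfloor(1-\tfrac1q)n+\tfrac{q+1}2\rfloor$, or alternatively your argument goes through verbatim under the convention that balls of negative radius are empty, which is implicitly how the paper handles this case.
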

\begin{proof}
Let $n=qk+r$ where $r\in[q]$, and let 
$s:=\lceil r-\frac rq+\frac{q-1}2+\frac1{2q}\rceil$.
Let $b+1=(q-1)k+s$, and let $x_i=(i,\ldots,i)$ for $i\in[q]$. 
Note that $x\in\Gamma_{b}(x_0)$ if and only if 
$\#\{j:(x)_j\neq 0\}\leq b$, or equivalently, 
$\#\{j:(x)_j=0\}\geq n-b=k+r-s+1$.
Similarly, $x\in\Gamma_{b-i}(x_i)$ if and only if 
$\#\{j:(x)_j=i\}\geq k+r-s+1+i$ for $i\in[q]$. 
We claim that $W:=\bigcup_{i=0}^{q-1}\Gamma_{b-i}(x_i)=[q]^n$.
To the contrary, suppose that $y\in[q]^n$ is not covered by $W$.
Then $\#\{j:(y)_j=i\}\leq k+r-s+i$ for all $i\in[q]$. 
By summing both sides from $i=0$ to $q-1$, we get
$n=qk+r\leq(k+r-s)q+\binom q2$, or equivalently,
$s\leq r-\frac rq+\frac{q-1}2$, a contradiction.
Thus we have $\beta(H(n,q))\leq b+1=(q-1)\frac{n-r}q+s
=\lfloor(1-\tfrac1q)n+\tfrac{q+1}2\rfloor$, where we used
$\lceil N/(2q)\rceil =\lfloor (N+2q-1)/(2q)\rfloor$ for $N\in\Z$
in the last equality.
\end{proof}

\begin{thm}
Let $p=1-\frac1q$. Then we have
\[
 \beta(H(n,q))>pn-\sqrt{2pn\log n}.
\]
\end{thm}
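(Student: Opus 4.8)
The plan is to bound the number of vertices that any burning sequence of length $b+1$ can ignite, and to show this count stays below $q^n$ unless $b+1$ exceeds $pn-\sqrt{2pn\log n}$. Writing $\beta:=\beta(H(n,q))$, I would fix a minimum burning sequence $x_0,\dots,x_b$ with $b+1=\beta$, so $[q]^n=\bigcup_{k=0}^b\Gamma_{b-k}(x_k)$. Since $|\Gamma_j(x)|$ is independent of $x$ and nondecreasing in $j$, a union bound gives
\[
q^n\le\sum_{k=0}^b|\Gamma_{b-k}(x_k)|=\sum_{j=0}^b|\Gamma_j|\le(b+1)\,|\Gamma_b|,
\]
where $|\Gamma_j|=\sum_{i=0}^j(q-1)^i\binom ni$ is the common ball volume.

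The next step is to reinterpret $|\Gamma_b|$ probabilistically. Fixing a vertex $x$ and letting $Y$ be uniform on $[q]^n$, the distance $d(x,Y)=\sum_{j=1}^n\mathbf 1[(Y)_j\ne(x)_j]$ is a sum of $n$ independent Bernoulli$(p)$ variables, hence has the Binomial$(n,p)$ distribution. Thus $|\Gamma_b|=q^n\,\P[\mathrm{Bin}(n,p)\le b]$, and the counting inequality rearranges to
\[
\P[\mathrm{Bin}(n,p)\le b]\ge\tfrac1{b+1}.
\]

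Then I would argue by contradiction, supposing $\beta\le pn-t$ with $t:=\sqrt{2pn\log n}$; one may assume $t<pn$, since otherwise $pn-t\le0<\beta$ and the bound is immediate. In this case $b=\beta-1<pn-t$ and $b+1=\beta<pn<n$, so $\tfrac1{b+1}>\tfrac1n$. Applying the standard Chernoff bound for the lower Binomial tail, $\P[\mathrm{Bin}(n,p)\le pn-\lambda]\le\exp(-\lambda^2/(2pn))$ for $0<\lambda<pn$, with $\lambda=t$ would yield
\[
\P[\mathrm{Bin}(n,p)\le b]\le\P[\mathrm{Bin}(n,p)\le pn-t]\le\exp\!\Big(-\tfrac{t^2}{2pn}\Big)=\exp(-\log n)=\tfrac1n.
\]
Combining these gives $\tfrac1n<\tfrac1{b+1}\le\P[\mathrm{Bin}(n,p)\le b]\le\tfrac1n$, a contradiction, so $\beta>pn-\sqrt{2pn\log n}$.

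The only delicate point I anticipate is the bookkeeping that renders the polynomial factor $b+1$ harmless against the exponentially small tail: the value $t=\sqrt{2pn\log n}$ is calibrated so that the sharp Chernoff exponent $\lambda^2/(2pn)$ equals exactly $\log n$, producing a tail of $1/n$ that the crude bound $b+1<n$ then cancels. Using a weaker tail estimate, or carrying the full sum $\sum_{j=0}^b|\Gamma_j|$ instead of the clean bound $(b+1)|\Gamma_b|$, would alter the constant under the square root, so matching these constants is where the care lies.
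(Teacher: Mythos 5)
Your proposal is correct and follows essentially the same route as the paper: bound the burned set by $(b+1)|\Gamma_b|$, identify $q^{-n}|\Gamma_b|$ with the lower tail $\P[\mathrm{Bin}(n,p)\le b]$, and apply the Chernoff bound calibrated so the tail equals $1/n$. The only differences are cosmetic --- you argue by contradiction from a minimum burning sequence and treat the edge case $t\ge pn$ explicitly, whereas the paper fixes $b=\lfloor pn-\sqrt{2pn\log n}\rfloor$ and verifies $(b+1)|\Gamma_b(x_0)|<|V|$ directly.
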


\begin{proof}
Let $b=\lfloor pn-\sqrt{2pn\log n}\rfloor$.
We need to show that no matter how $x_0,\ldots,x_b\in V$ are chosen, we have
$|\bigcup_{i=0}^b\Gamma_{b-i}(x_i)|<|V|$.
Even more strongly, we claim that $(b+1)|\Gamma_b(x_0)|<|V|$.

Let $X_1,\ldots,X_n$ be independent random variables with
$\P[X_i=1]=p$, $\P[X_i=0]=1-p$.
Let $X=\sum_{i=1}^nX_i$. Then we have $\mu:=\E[X]=pn$ and
$\P[X=k]=\binom nkp^k(1-p)^{n-k}$.
It follows from the Chernoff bound \cite{Chernoff} (see also e.g., 
\S 21.4 in \cite{Frieze-Karonski}), by setting 
$\epsilon=\sqrt{(2\log n)/(pn)}$, that
\begin{align*}
\P\left[X\leq pn-\sqrt{2pn\log n}\right]=
\P\left[X\leq(1-\epsilon)\mu\right]\leq 
\exp\left(-\frac{\mu\epsilon^2}2\right)=
\frac1n. 
\end{align*}
Thus we have
\begin{align*}
 q^{-n}|\Gamma_b(x_0)|
&=q^{-n}\sum_{k=0}^b(q-1)^k\binom nk
=\sum_{k=0}^b\binom nkp^k(1-p)^{n-k}\\
&=\P[X\leq b]
\leq\P\left[X\leq pn-\sqrt{2pn\log n}\right]
\leq \frac1n, 
\end{align*}
and so
\[
(b+1)|\Gamma_b(x_0)|\leq(b+1)\frac{|V|}n<p|V|<|V|,
\]
as desired.
\end{proof}

\section*{Acknowledgments}
The author thanks Naoki Matsumoto for stimulating discussions.
The author was supported by JSPS KAKENHI Grant Number JP23K03201.

\section*{Data Availability}
Data sharing not applicable to this article as no data sets were generated 
or analysed during the current study.

\section*{Declarations}
Conflict of interest: 
The author has no relevant financial or non-financial interests to disclose.

\end{document}